\providecommand{\U}[1]{\protect\rule{.1in}{.1in}}
\newcommand{\BlackBoxes}{\global\overfullrule5pt}
\newcommand{\R}{\mathbb{R}}
\newcommand{\N}{\mathbb{N}}
\newcommand{\Eop}{\mathbb{E}}
\newcommand{\Pop}{\mathbb{P}}
\newtheorem{theorem}{Theorem}
\newtheorem{lemma}[theorem]{Lemma}
\theoremstyle{definition}
\newtheorem{remark}[theorem]{Remark}
\newtheorem{definition}[theorem]{Definition}
\numberwithin{equation}{section} \numberwithin{theorem}{section}
\def\0{\kern0pt\-\nobreak\hskip0pt\relax}
\def\makeoverbar#1#2#3#4#5#6#7{ \setbox0=\hbox{$\m@th#2\mkern#5mu{{}#3{}}\mkern#6mu$} \setbox1=\null \dimen@=#4\fontdimen8#13 \dimen@=3.5\dimen@
\advance\dimen@ by \ht0 \dimen@=-#7\dimen@ \advance\dimen@ by \wd0
\ht1=\ht0 \dp1=\dp0 \wd1=\dimen@
\dimen@=\fontdimen8#13 \fontdimen8#13=#4\fontdimen8#13
\rlap{\hbox to \wd0{$\m@th\hss#2{\overline{\box1}}\mkern#5mu$}}
\fontdimen8#13=\dimen@}
\def\mylabel#1#2{{\def\@currentlabel{#2}\label{#1}}}
\begin{document}
\title[Markov Decision Processes under Ambiguity ]{Markov Decision Processes under Ambiguity}
\author[N. \smash{B\"auerle}]{Nicole B\"auerle${}^*$}
\address[N. B\"auerle]{Institute for Stochastics,
Karlsruhe Institute of Technology (KIT), D-76128 Karlsruhe, Germany}

\email{\href{mailto:nicole.baeuerle@kit.edu}{nicole.baeuerle@kit.edu}}

\author[U. \smash{Rieder}]{Ulrich Rieder$^\ddag$}
\address[U. Rieder]{Institute of Optimization and Operations Research, University of Ulm,   D-89069 Ulm, Germany}

\email{\href{mailto:ulrich.rieder@uni-ulm.de} {ulrich.rieder@uni-ulm.de}}

\thanks{${}^*$ Institute of Stochastics,
Karlsruhe Institute of Technology (KIT), D-76128 Karlsruhe, Germany}
\thanks{${}^\dagger$ Institute of Optimization and Operations Research,  University of Ulm, D-89069 Ulm, Germany}

\begin{abstract}
We consider  statistical Markov Decision Processes where the decision maker is risk averse against model ambiguity. The latter is given by an unknown parameter which influences the transition law and the cost functions. Risk aversion is either measured by the entropic risk measure or by the Average Value at Risk. We show how to solve these kind of problems using a general minimax theorem. Under some continuity and compactness assumptions we prove the existence of an optimal (deterministic) policy and discuss its computation. We illustrate our results using an example from statistical decision theory.

\end{abstract}
\maketitle

%\listoftodos

\makeatletter \providecommand\@dotsep{5} \makeatother
%\listoftodos[Changes in Orange/Red To Do List in Green / Blue]\relax

%\address[N. B\"auerle]{Department of Mathematics,
%Karlsruhe Institute of Technology, D-76128 Karlsruhe, Germany}

%\email{\href{mailto:nicole.baeuerle@kit.edu}{nicole.baeuerle@kit.edu}}

\vspace{0.5cm}
\begin{minipage}{14cm}
{\small
\begin{description}
\item[\rm \textsc{ Key words} ]
{\small Model Ambiguity; Bayesian MDP; Entropic Risk Measure; Average Value at Risk; Dual Representation; Minimax Theorem}
%\item[\rm \textsc{AMS subject classifications}]
%{\small  ..... }
\end{description}
}
\end{minipage}

\section{Introduction}
The following experiment has (in a variant) been  suggested by Ellsberg (1961) (see e.g. \cite{gs}): An agent has to choose between two bets. For this she is shown two urns, each containing 100 balls which are either red or black. Urn A contains 50 red and 50 black balls while there is no further information about urn B. Bet 1 is: 'the ball drawn from urn A is black' and bet 2 is: 'the ball drawn from urn B is black'. In case of winning the bet, the agent receives 100 euros. Empirically it has been observed that most agents prefer bet 1. One explanation for this behavior is that  in case of urn B agents consider a set of possible distributions for the colours of the balls and being ambiguity averse take into account the minimal expected utility. 

This point of view has become popular in economics and has been formalized later on. In particular one has to specify the possible set of distributions which have to be taken into account. For example \cite{hs01} consider in the framework of a continuous-time consumption problem the set of distributions $\Pop$ whose relative entropy with respect to a fixed distribution $\Pop_0$ is less or equal to a constant. Using a Lagrange approach this is equivalent to penalizing the robust problem with the distance of the distribution to $\Pop_0.$ Optimization criteria like this have been put on an axiomatic basis by \cite{mmr}.

As far as Markov Decision Processes (MDPs)  are concerned, robust approaches have been considered in \cite{I05} among others. As in \cite{hs01} model ambiguity is here treated with respect to the whole probability measure of the process. Since the probability measure in MDP theory is a product of transition kernels such robust optimization problems can also be interpreted as games against nature.

In this paper we will take another point of view which is related to the models introduced in \cite{kmm}. There, the whole risk is separated into two parts: Model ambiguity and operating risk. One has to operate a system under an unknown probability law which is  chosen by nature (from a finite set) in a worst case way. This model ambiguity is incorporated in the optimization criterion in a risk-sensitive way.  For further literature on ambiguity see the survey \cite{gr}.

We will start with a statistical  Markov Decision Process where the transition kernel and cost functions depend on an unknown parameter for which we have a prior distribution. Only the states of the process are observable. Since the Ellsberg experiment suggests that the parameter (model) ambiguity should be treated different to uncertainty of the evolution of the process, we will consider the expected cost of a policy as a random variable and use the entropic risk measure for the model ambiguity. This is in this specific setting similar to the approach in \cite{kmm}, but different to approaches where the entropic risk measure is applied to the product measure of parameter uncertainty and uncertainty of process evolution. The latter approach has been pursued in \cite{br17} and extended to robust problems in \cite{rs18}. Using the dual representation of the entropic risk measure we can show that there is a connection of our risk-sensitive optimization criterion to the robust penalty problem considered in \cite{hs01}. Relations like this have already been discussed in \cite{o12}. However, note that in our setting nature chooses only the worst case measure with respect to the parameter uncertainty. Our model includes both the classical Bayesian MDP (with risk neutral attitude towards ambiguity) and the robust MDP as limiting cases. We use the general minimax theorem of Kneser, Fan, Sion (see \cite{sion}) and results of \cite{schal} to solve our problem. It is easy to see from our approach that the solution method not only applies to the case where model ambiguity is evaluated by the entropic risk measure, but to any convex risk measure with a suitable dual representation. Thus, we will also consider the case where   model ambiguity is evaluated by the Average Value at Risk. This complements studies in which the Average Value at Risk is applied to the whole discounted cost (see e.g. \cite{bo11, ctmp}).

Our paper is organized as follows: In the next section we introduce our statistical MDP with a given prior distribution $\mu_0$ and the optimization criterion which we consider. Alternative representations and interpretations are also discussed. Section \ref{sec:policies} is then devoted to the minimax theorem and the existence of optimal polices. It will turn out that under some continuity and compactness assumptions, optimal policies exist and coincide with the optimal policy of a classical Bayesian  MDP with different (more pessimistic) prior $\mu^*$ instead of $\mu_0$.  The model with Average Value at Risk is discussed in Section \ref{sec:avar} and yields from a structural point of view the same policy. Section \ref{sec:BayesMDP} explains how the problem can be solved in an algorithmic way. In the last section we explain our approach using a specific example from statistical decision theory. In this example we are able to derive the optimal policy for the entropic risk measure as well as for the Average Value at Risk.

\section{MDP with Entropic Risk Measure for Model Ambiguity}
We suppose that a {\em statistical Markov Decision Processes} is given which we introduce as follows: We assume that  the state space  $E$ is a Borel space, i.e., a Borel subsets of some Polish space endowed with the $\sigma$-algebra of Borel sets.  Actions can be taken from a set $A$ which is again a Borel space. The set $D_n\subset E \times A$ is a Borel subset for $n\in\N_0$. By $D_n(x) := \{a\in A : (x,a)\in D_n\}$ we denote the feasible actions depending on the state  $x$ at time $n$. We assume that $D_n$ contains the graph of a measurable mapping from $E$ to $A$. Furthermore there is a non-empty parameter space $\Theta$ endowed with some $\sigma$-algebra. The  stochastic transition kernel $Q_n^\vartheta$ from $D_{n-1}$ to $E$ which determines the distribution of the new state at time $n$ given the current state and action depends on a parameter $\vartheta\in \Theta$. So $Q_n^\vartheta(B|x,a)$ is the probability  that the next state at time $n$ is in $B\in\mathcal{B}(E)$, given the current state is $x$ and action $a\in D_{n-1}(x)$ is taken. $Q_0^\vartheta$ is the distribution of the initial state.  In what follows we assume that the law of motion is given by 
\begin{eqnarray*}
Q_0^\vartheta (dx) &:=& q_0^\vartheta(x) \lambda_0(dx),\\
Q^\vartheta_n(dx'| x,a) &:=& q_n^\vartheta(x,a,x') \lambda_n(dx').
\end{eqnarray*}
We assume that $\lambda_n$ are probability measures on $E$. Moreover, let
\begin{eqnarray*}
(\vartheta,x) & \mapsto & q_0^\vartheta(x),\\
(\vartheta,x,a,x') &\mapsto & q_n^\vartheta(x,a,x')
\end{eqnarray*} 
be non-negative measurable functions on $\Theta\times E$ and  $\Theta\times D_{n-1}\times E$ for all $n\in\N$ respectively.

\begin{remark}
In general, $\lambda_n$ are assumed to be $\sigma$-finite measures for all $n$. But then there exists a probability measure $\lambda_n^*$ and a finite positive density $f_n(x')$ such that $\lambda_n(dx')=f_n(x') \lambda_n(dx')$. Then we can replace $\lambda_n$ by $\lambda_n^*$ and $q_n^\vartheta(x,a,x')$ by $q_n^\vartheta(x,a,x')f_n(x')$ and w.l.o.g.\ we may assume that $\lambda_n$ are probability measures.
\end{remark}

%transition kernel $Q^\vartheta$ has a measurable density $q^\vartheta$ with respect to some $\sigma$-finite measure $\lambda$, i.e.,
%$$ Q^\vartheta(B | x,a) =\int_{B} q^\vartheta(x'|x,a) \lambda(dx'),\quad B\in\mathcal{B}(E).$$

Next we introduce policies for the decision maker. Here it is important to consider the {\em set of observable histories} which are defined as follows:
\begin{eqnarray*}
% \nonumber to remove numbering (before each equation)
  H_0 &:=& E \\
  H_n &:=& \{ (h_{n-1},a_{n-1}) : h_{n-1}\in H_{n-1}, a_{n-1}\in D_{n-1}(x_{n-1}) \} \times E.
\end{eqnarray*}
An element $h_n=(x_0,a_0,x_1,\ldots,x_n)\in H_n$ denotes the observable history of the process up to time $n$ which consists of the sequence of states and actions. For a Borel set $M$ we denote by $\mathcal{P}(M)$ the set of all probability measures on $M$. In what follows we consider MDPs with finite horizon $N\in\N$.

\begin{definition}
\begin{enumerate}
\item[a)] A measurable mapping $\pi_n: H_n\to \mathcal{P}(A)$ with the property that it holds $\pi_n(h_n)( D_n(x_n))=1$ for $h_n\in H_n$ is called a {\em randomized decision rule} at stage $n$.
\item[b)] A sequence $\pi=(\pi_0,\pi_1,\ldots,\pi_{N-1})$ where $\pi_n$ is a randomized decision rule at stage $n$  for all $n$, is called {\em policy}. We denote by $\Pi$ the set of all policies.
 %Moreover, we denote by $\pi=(f_0,f_1,\ldots )\Pi_\infty $ the set of all infinite horizon policies.
 \item[c)] A decision rule $\pi_n: H_n\to \mathcal{P}(A)$ is called {\em deterministic} if  $\pi_n(h_n)= \delta_{f_n(h_n)}$ for some measurable function $f_n:H_n\to A$ with $f_n(h_n)\in D_n(x_n).$ Here $\delta_x$ is the one-point measure on $x$. A policy is called {\em deterministic} if all decision rules are deterministic.
\end{enumerate}
\end{definition}

A policy $\pi=(\pi_0,\pi_1,\ldots,\pi_{N-1})$ induces according to the theorem of Ionescu-Tulcea a probability measure
$$ \Pop_\pi^\vartheta := Q_0 ^\vartheta\otimes \pi_0 \otimes Q_1^\vartheta \otimes \pi_1 \otimes Q_2^\vartheta\ldots\otimes \pi_{N-1}\otimes Q_N^\vartheta$$
on $H_N$. Since $Q_n^\vartheta$ depends measurably on $\vartheta$, we may infer that for any $\pi\in \Pi$, the mapping $(\vartheta,B) \mapsto \Pop_\pi^\vartheta(B)$ is a transition probability from $\Theta$ into $H_N$.
%Of course $\Pop^\pi_\vartheta$ also defines a probability measure on $H_n$ by restricting to this subset. Here we suppose that $q_0^\vartheta$ is a probability measure on $E$ and denotes the distribution of the first state, also depending on $\vartheta$. 

The corresponding stochastic decision process is given by $(X_0,A_0,X_1, A_1,\ldots,X_N)$ and determines the state-action process.

Further we have  measurable and bounded  cost functions $$(\vartheta,x,a) \mapsto c_n^\vartheta(x,a)$$ on $ \Theta\times D_n$ and a measurable and bounded terminal cost 
$$(\vartheta,x)\mapsto g_N^\vartheta(x)$$
on $\Theta\times E$. All cost functions may 
% We assume in particular that the cost $c^\vartheta(x,a)$ and the terminal cost $g^\vartheta(x)$ are bounded  and  may 
depend on the unknown parameter $\vartheta$. 
Note that in this case we assume that cost are not observable.

We are now interested in the cost incurred by this decision process over the finite time horizon $N$. Therefore, we define for a policy $\pi$ 
$$ C_{N\pi}(\vartheta) := \Eop^\vartheta_\pi\Big[\sum_{n=0}^{N-1}  c_n^\vartheta(X_n,A_n)+ g_N^\vartheta(X_N)\Big]$$
where $ \Eop^\vartheta_\pi$ is the expectation with respect to $ \Pop^\vartheta_\pi$. Note that $\vartheta\mapsto C_{N\pi}(\vartheta)$ is measurable on $\Theta$.
Suppose $\mu_0\in\mathcal{P}(\Theta)$ is a fixed initial belief about the unknown parameter $\vartheta$. In the established theory of Bayesian MDP (see e.g. \cite{br11}, Sec. 5) the aim would be to minimize
\begin{equation}\label{eq:Bayes}
 \int C_{N\pi}(\vartheta)  \mu_0(d\vartheta)  \end{equation}
over all policies $\pi$. This criterion implies that the decision maker is risk neutral with respect to the operating risk as well as with respect to model ambiguity, given in form of the prior $\mu_0$. In what follows we will now consider the case that the decision maker is risk averse with respect to model ambiguity. More precisely, we consider 
\begin{eqnarray} \label{eq:vnpi}
V_N(\pi) &:=& \frac1\gamma \ln\Big(\int e^{\gamma C_{N\pi}(\vartheta) }\mu_0(d\vartheta) \Big),\\ \label{opt:prob}
V_N &:=& \inf_\pi V_N(\pi)
\end{eqnarray} 
with $\gamma>0$. For small $\gamma$ the criterion is approximately equal to (see \cite{bp})
$$V_N(\pi) \approx\int C_{N\pi}(\vartheta) \mu_0(d\vartheta) + \frac12 \gamma  Var_{\mu_0}[C_{N\pi} ].$$
In particular for $\gamma\downarrow 0$ we obtain in the limit the classical Bayesian MDP \eqref{eq:Bayes}. For $\gamma >0$ the variability of the minimal cost in $\vartheta$ is penalized. Moreover, we have the following  representation for \eqref{eq:vnpi}, also known as 'dual' representation (see \cite{fs}, p.279) where
$$ V_N(\pi)= \sup_{\mu\in \mathcal{P}(\Theta)} \Big\{\int C_{N\pi}(\vartheta) \mu(d\vartheta)-\frac1\gamma I(\mu \| \mu_0)\Big\}  $$
where for  $\mu,\nu\in \mathcal{P}(\Theta),  $
$$ I(\mu\| \nu) := \left\{ \begin{array}{cc}
\int\ln(\frac{d \mu}{d \nu})d \mu, & \mbox{ if } \mu\ll \nu,\\
\infty, & \mbox{ else}
\end{array}  \right. $$  is the relative entropy function or Kullback-Leibler distance. From this representation we see that the  case $\gamma\uparrow \infty$ corresponds to the case of a robust optimization problem or worst-case optimization problem where we minimize the cost if nature choose the least favourable measure for the parameter $\vartheta$. For $\gamma>0$ this means that potentially a whole range of beliefs about $\vartheta$ is considered but deviations from the belief $\mu_0$ are penalized.  A similar criterion has been used in \cite{hs01} where preferences  of an agent for a bet $X$ are expressed by
$$ \sup_{\Pop\in \mathcal{P}(\Omega)} \Big\{ \Eop_\Pop[X] -\frac1\gamma I(\Pop \| \Pop_0)\Big\}.$$ 
In our paper we relate model ambiguity only to the unknown parameter $\vartheta$.  This is connected to what in economics is called two-stage approach and where ambiguity typically arises in the first (model) stage. Empirically this has been discovered in the famous Ellsberg experiment. In \cite{kmm} it has been suggested to consider
$$\sum_j p_j \xi\Big( \Eop_{\Pop_j}[U(X)] \Big)$$ 
as a preference function where $\xi$ is an increasing real-valued function which describes the agent's attitude towards ambiguity and $U$ is a utility function.

In what follows we denote by 
$$C_{N\pi}(\mu) :=\int C_{N\pi}(\vartheta) \mu(d\vartheta).$$
%The supremum is attained at
%$$ \mu^*(\theta_i) = \frac{e^{\Eop_{\theta_i}^\pi[X]}}{\sum_j e^{\Eop_{\theta_j}^\pi[X]}\mu_0(\theta_j) } \mu_0(\theta_i)$$
When we insert the dual representation in \eqref{opt:prob}, then we obtain \begin{equation}\label{eq:dual}
V_N=\inf_\pi\sup_{\mu\in \mathcal{P}(\Theta)} \Big\{C_{N\pi}(\mu)-\frac1\gamma I(\mu \| \mu_0)\Big\}.\end{equation}
Though it is well-known how the solution of the inner optimization looks like, namely
$$  \hat{\mu}(d \vartheta) := \frac{e^{C_{N\pi}(\vartheta)}}{\int e^{C_{N\pi}(\vartheta')}\mu_0(d\vartheta') } \mu_0(d\vartheta),$$ 
it is impossible to solve the outer minimization problem directly, nor get some information about the structure of the optimal policy, since $\hat{\mu}$ depends on the policy, too.

\section{Existence of Optimal Policies} \label{sec:policies}

It would  be easier to solve the problem if we could interchange the sup and the inf in \eqref{eq:dual}. In order to achieve this we use the general minimax theorem of Kneser, Fan, Sion (see \cite{sion}). The theorem uses the definition of concave-convexlike functions.

\begin{definition}
A function $h$ on $M\times N$ is called concave-convexlike, if
\begin{itemize}
\item[(i)] for all $x_1,x_2\in M$ and $0\le \alpha\le 1$, there is an $x\in M$ such that
$$ \alpha h(x_1,y) +(1-\alpha) h(x_2,y) \le h(x,y),\quad \mbox{ for all } y\in N.$$
\item[(ii)]for all $y_1,y_2\in N$ and $0\le \alpha\le 1$, there is an $y\in N$ such that
$$ \alpha h(x,y_1) +(1-\alpha) h(x,y_2) \ge h(x,y),\quad \mbox{ for all } x\in M.$$
\end{itemize}
\end{definition}

Note in particular that any function $h$ on $M\times N$ which is concave in the first component and convex in the second component is concave-convexlike. Then Theorem 4.2. in \cite{sion} tells us:

\begin{theorem}\label{theo:minimax}
Let $M$  be  any space and $N$ be a compact space,  $h$ a function on $M\times N$ that is concave-convexlike. If  $h(x,y)$ is lower semi-continuous in $y$ for all $x\in M$ then $$\sup_x\min_y h(x,y)= \min_y \sup_x h(x,y).$$
\end{theorem}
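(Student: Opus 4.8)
The plan is to prove the nontrivial inequality $\min_y\sup_x h(x,y)\le\sup_x\min_y h(x,y)$; the reverse inequality is elementary. Write $v:=\sup_x\min_y h(x,y)$. First I would record that both sides are well defined: $h(x,\cdot)$ is lower semi-continuous on the compact space $N$, so $\min_y h(x,y)$ is attained, and $y\mapsto\sup_x h(x,y)$ is a supremum of lower semi-continuous functions, hence itself lower semi-continuous on the compact $N$, so $\min_y\sup_x h(x,y)$ is attained. For weak duality, note that for every $x_0\in M$ and $y_0\in N$ one has $\min_y h(x_0,y)\le h(x_0,y_0)\le\sup_x h(x,y_0)$; taking the supremum over $x_0$ and then the minimum over $y_0$ gives $v\le\min_y\sup_x h(x,y)$.

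For the hard direction I would argue by compactness. Fix any $c>v$. For $x\in M$ set $G_x:=\{y\in N:h(x,y)\le c\}$, which is closed since $h(x,\cdot)$ is lower semi-continuous. If every finite intersection $G_{x_1}\cap\dots\cap G_{x_m}$ is nonempty, then by the finite intersection property and compactness of $N$ the set $\bigcap_{x\in M}G_x$ is nonempty; picking $y_c$ in it yields $\sup_x h(x,y_c)\le c$, hence $\min_y\sup_x h(x,y)\le c$. Letting $c\downarrow v$ then gives $\min_y\sup_x h(x,y)\le v$, which together with weak duality proves the theorem. So everything reduces to the \emph{finite lemma}: for any $x_1,\dots,x_m\in M$, $\min_y\max_{1\le i\le m}h(x_i,y)\le v$.

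I would prove the finite lemma by induction on $m$; the case $m=1$ is immediate from the definition of $v$. For the inductive step, suppose the claim holds for collections of size $m-1$ and, aiming at a contradiction, assume $\min_y\max_{1\le i\le m}h(x_i,y)>v$; choose $c$ with $v<c<\min_y\max_i h(x_i,y)$, so $\bigcap_{i=1}^m\{h(x_i,\cdot)\le c\}=\emptyset$. Using property (i) of concave-convexlikeness, for each $t\in[0,1]$ pick $z_t\in M$ with $h(z_t,y)\ge t\,h(x_{m-1},y)+(1-t)\,h(x_m,y)$ for all $y$, and put
$$K_t:=\Big\{y\in N:\ h(x_i,y)\le c\ (1\le i\le m-2),\ \ t\,h(x_{m-1},y)+(1-t)\,h(x_m,y)\le c\Big\}.$$
Applying the induction hypothesis to $\{x_1,\dots,x_{m-2},z_t\}$ shows $K_t\ne\emptyset$; $K_t$ is closed and convex (a sublevel set of lower semi-continuous, resp.\ convex, functions), hence connected. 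Any $y\in K_t$ satisfies $h(x_{m-1},y)\le c$ or $h(x_m,y)\le c$, so $K_t\subseteq B_{m-1}\cup B_m$ where $B_j:=\{y:h(x_i,y)\le c\ (i\le m-2),\ h(x_j,y)\le c\}$; by the induction hypothesis applied to $\{x_1,\dots,x_{m-1}\}$ and to $\{x_1,\dots,x_{m-2},x_m\}$ both $B_{m-1}$ and $B_m$ are nonempty, both are closed, and $B_{m-1}\cap B_m=\emptyset$ by the choice of $c$. Thus the nonempty connected set $K_t$ is the disjoint union of the relatively closed sets $K_t\cap B_{m-1}$ and $K_t\cap B_m$, so exactly one of them is empty. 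Setting $S_j:=\{t\in[0,1]:K_t\cap B_j\ne\emptyset\}$ for $j\in\{m-1,m\}$, we get $S_{m-1}\cap S_m=\emptyset$, $S_{m-1}\cup S_m=[0,1]$, $1\in S_{m-1}$ and $0\in S_m$. A routine compactness-and-lower-semicontinuity argument (extract a convergent subsequence $y_{t_k}\in K_{t_k}\cap B_j$, pass to the limit using $t_k\to t$ and the $h(x_i,\cdot)$ being bounded below on $N$) shows $S_{m-1}$ and $S_m$ are closed, contradicting the connectedness of $[0,1]$. This proves the finite lemma, hence the theorem.

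The main obstacle is precisely this inductive step: the interpolation argument on $[0,1]$ needs the sublevel sets of the relevant combinations of $h(\cdot,y)$ to be convex, hence connected. I would therefore first carry out the proof under the stronger hypothesis emphasized after the definition — $M$, $N$ convex, $h$ concave in $x$ and convex and lower semi-continuous in $y$ — which is exactly the situation met in this paper, where $h(\mu,\pi)=C_{N\pi}(\mu)-\frac1\gamma I(\mu\|\mu_0)$ is affine in $\pi$ (after passing to an equivalent randomization) and concave in $\mu$. Upgrading to the general concave-convexlike setting of Theorem~\ref{theo:minimax}, where $N$ carries no linear structure and ``convexlike'' supplies only approximate midpoints, requires replacing the explicit $[0,1]$-connectedness argument by Sion's reduction to the convex, finite-dimensional case; that reduction is the genuinely delicate part, and this is where I would invoke the machinery of \cite{sion}.
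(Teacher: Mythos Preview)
The paper gives no proof of this theorem at all: it is quoted verbatim as ``Theorem~4.2 in \cite{sion}'' (attributed jointly to Kneser, Fan and Sion), so there is no in-paper argument to compare against. Your reduction of the strong inequality to the finite lemma via the finite intersection property is standard and correct.

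The genuine gap lies in your inductive step. The interpolation argument on $[0,1]$ hinges on the sets $K_t$ being \emph{connected}, which you obtain from convexity of $K_t$; but that requires $N$ to be a convex subset of a linear space and each $h(x_i,\cdot)$ to be (quasi-)convex. The theorem as stated assumes neither: $N$ is an arbitrary compact space with no linear structure, and ``convexlike'' only guarantees, for each pair $y_1,y_2$ and weight $\alpha$, the existence of \emph{some} $y$ dominated by the convex combination --- it says nothing about sublevel sets being connected. You acknowledge this and propose to cover the general concave-convexlike case by invoking the machinery of \cite{sion}, which is exactly what the paper does in the first place; so for the theorem as stated your proposal is not an independent proof. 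Your side remark that the paper's own application already fits the concave--convex special case is also not accurate: $\Pi$ is topologised via the inverse image of the $ws^\infty$-topology on $\Pi^\lambda\subset\mathcal P(H_N)$ and is nowhere presented as a convex subset of a topological vector space, and Lemma~\ref{lem:CC}\,(b) establishes precisely the convexlike property (existence of a mixing policy $\pi$ achieving the convex combination of costs), not convexity of $L_N(\mu,\cdot)$ on a convex $\Pi$. That is why the concave-convexlike formulation, rather than the quasi-concave/quasi-convex version on convex sets, is the one the paper invokes.
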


We would like to apply the theorem to the function $$L_N(\mu,\pi):= C_{N\pi}(\mu)-\frac1\gamma I(\mu \| \mu_0)$$ which is defined on $\mathcal{P}(\Theta)\times \Pi.$ 
%On $\mathcal{P}(\Theta)$ we use the topology of weak convergence. 
A topology on $\Pi$ can be introduced as follows: Denote by  $\Pop^\lambda_\pi$  the probability measure on $H_N$ defined by
$$ \Pop^\lambda_\pi := \lambda_0 \otimes \pi_0\otimes \lambda_1\otimes \ldots \otimes\pi_{N-1} \otimes \lambda_N$$
and let $\Pi^\lambda :=\{\Pop^\lambda_\pi : \pi\in \Pi \}\subset \mathcal{P}(H_N)$  the set of all probability measures $\Pop^\lambda_\pi$ which are generated by policies.
%\mathcal{P}_\vartheta:= \{ \Pop^\pi_\vartheta : \pi\in\Pi\}$  the set of all probability measures which can be generated by policies. 
On $\mathcal{P}(H_N) $ we consider the ws$^\infty$-topology (see \cite{schal75}), i.e. the coarsest topology such that $\Pop \mapsto \int g d\Pop$ is continuous for all $g:H_N\to \R$ such that $(a_0,\ldots,a_{N-1}) \mapsto g(x_0,\ldots ,x_{N}; a_0,\ldots,a_{N-1})$ is continuous and the function $g$  is bounded and measurable.  Given the relativization of the ws$^\infty$-topology on $\Pi^\lambda,$ we can then endow $\Pi$ with the inverse image under the mapping $\pi\mapsto \Pop_\pi^\lambda$ of the topology on $\Pi^\lambda$. This is the coarsest topology on $\Pi$ for which $\pi\mapsto \Pop_\pi^\lambda$ is continuous.

For the next statements we need some assumptions for all $n=0,1,\ldots,N-1$. 

\begin{description}
\item[(C1)] The set $D_n(x)$ is  compact for all $x$.
\item[(C2)] The function $a\mapsto q_n^\vartheta (x,a,x')$ is lower semi-continuous for all $x,x'\in E$ and $\vartheta\in\Theta$.
\item[(C3)] The function $a\mapsto c_n^\vartheta (x,a)$ is lower semi-continuous for all $x\in E$ and $\vartheta\in\Theta$.
\end{description}

Then we obtain:

\begin{lemma}\label{lem:CC}
Under (C1)-(C3) it holds:
\begin{itemize}
\item[a)] $\Pi$ is compact.
\item[b)] The mapping $\pi \mapsto L_N(\mu,\pi)$ is lower semi-continuous on $\Pi$ for all $\mu\in\mathcal{P}(\Theta)$ and for all $\pi_1,\pi_2\in\Pi$ and $\alpha\in(0,1)$ there exists a policy $\pi\in\Pi$ such that $L_N(\mu, \pi)= \alpha L_N(\mu,{\pi_1})+(1-\alpha) L_N(\mu, {\pi_2})$ for all $\mu\in\mathcal{P}(\Theta).$
\item[c)] $\mathcal{P}(\Theta)$ is  convex and $\mu \mapsto L_N(\mu,\pi)$ is concave on $\mathcal{P}(\Theta)$.
\end{itemize} 
\end{lemma}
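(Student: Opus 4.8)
The plan is to reduce all three assertions to two structural facts about the set of strategic measures $\Pi^\lambda\subset\mathcal{P}(H_N)$ in the ws$^\infty$-topology — that it is convex and compact — together with the observation that $\pi\mapsto C_{N\pi}(\mu)$ factors through $\pi\mapsto\Pop^\lambda_\pi$ as an \emph{affine} functional. For the latter, note that since $Q_0^\vartheta(dx)=q_0^\vartheta(x)\lambda_0(dx)$, $Q_n^\vartheta(dx'|x,a)=q_n^\vartheta(x,a,x')\lambda_n(dx')$ and $\Pop^\vartheta_\pi,\Pop^\lambda_\pi$ are built from the \emph{same} decision rules, one has $\Pop^\vartheta_\pi\ll\Pop^\lambda_\pi$ with density $\ell^\vartheta(h_N):=q_0^\vartheta(x_0)\prod_{m=1}^N q_m^\vartheta(x_{m-1},a_{m-1},x_m)$. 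Hence, with $W^\vartheta(h_N):=\sum_{n=0}^{N-1}c_n^\vartheta(x_n,a_n)+g_N^\vartheta(x_N)$,
$$C_{N\pi}(\mu)=\int\!\!\int W^\vartheta(h_N)\ell^\vartheta(h_N)\,\Pop^\lambda_\pi(dh_N)\,\mu(d\vartheta)=\int F_\mu(h_N)\,\Pop^\lambda_\pi(dh_N),\quad F_\mu:=\int W^\vartheta\ell^\vartheta\,\mu(d\vartheta),$$
Fubini being applicable since the cost functions are bounded and $\int\ell^\vartheta\,d\Pop^\lambda_\pi=1$. Replacing each $c_n^\vartheta$, $g_N^\vartheta$ by $c_n^\vartheta+K$, $g_N^\vartheta+K$ for $K$ large changes $C_{N\pi}(\vartheta)$, hence $L_N(\mu,\pi)$, only by a constant independent of $\pi$ and $\vartheta$ and affects none of the assertions, so I will assume $c_n^\vartheta,g_N^\vartheta\ge0$ throughout; then $W^\vartheta\ell^\vartheta$ and $F_\mu$ are non-negative.

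For a) I would invoke \cite{schal,schal75}: under (C1) and the (standard) measurability of the correspondences $x\mapsto D_n(x)$ — which holds as $D_n$ is Borel and contains a measurable graph — the set $\Pi^\lambda$ is ws$^\infty$-compact; note that the state transitions appearing in $\Pop^\lambda_\pi$ are the fixed probability kernels $\lambda_n$, so (C2)–(C3) are not needed here. Since $\Pi$ carries the coarsest topology making the surjection $\pi\mapsto\Pop^\lambda_\pi$ onto $\Pi^\lambda$ continuous, compactness of $\Pi$ follows (an open cover of $\Pi$ may be taken of the form $\{f^{-1}(U_i)\}$, the $U_i$ then cover $\Pi^\lambda$, and a finite subcover pulls back). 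For the ``convexlike'' part of b): $\Pi^\lambda$ is convex — given $\pi_1,\pi_2$ and $\alpha$, the mixture $\nu:=\alpha\Pop^\lambda_{\pi_1}+(1-\alpha)\Pop^\lambda_{\pi_2}$ has all state marginals and state-conditionals equal to the $\lambda_n$, so letting $\pi_n$ be a regular version of the conditional law of $A_n$ given $H_n$ under $\nu$ (corrected on a null set so that $\pi_n(h_n)(D_n(x_n))=1$ via a measurable selector of $D_n$) yields a policy $\pi$ with $\Pop^\lambda_\pi=\nu$. Inserting this into the affine representation gives $C_{N\pi}(\mu)=\alpha C_{N\pi_1}(\mu)+(1-\alpha)C_{N\pi_2}(\mu)$ for every $\mu$, and since $I(\mu\|\mu_0)$ does not involve $\pi$ and $\alpha+(1-\alpha)=1$, also $L_N(\mu,\pi)=\alpha L_N(\mu,\pi_1)+(1-\alpha)L_N(\mu,\pi_2)$ for all $\mu$.

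For the lower semicontinuity in b) it suffices, by the affine representation, that $\Pop\mapsto\int F_\mu\,d\Pop$ be ws$^\infty$-l.s.c. By (C2) each $a\mapsto q_m^\vartheta(x_{m-1},a,x_m)$ is l.s.c.\ and by (C3) each $a\mapsto c_n^\vartheta(x_n,a)$ is l.s.c.; since all these functions are non-negative, their products and sums, hence $a_0,\dots,a_{N-1}\mapsto W^\vartheta(h_N)\ell^\vartheta(h_N)$, are l.s.c.\ in the action coordinates, and then so is $F_\mu$ by Fatou's lemma applied to the $\mu$-integral. A non-negative, measurable function that is l.s.c.\ in the actions is an increasing limit of functions that are bounded, measurable and \emph{continuous} in the actions (truncate at level $m$, then inf-convolve $a'\mapsto\psi(x,a')+k\,d(\cdot,a')$ over the metric action space), so by monotone convergence $\Pop\mapsto\int F_\mu\,d\Pop$ is a supremum of ws$^\infty$-continuous functions and therefore l.s.c.; consequently $\pi\mapsto L_N(\mu,\pi)=C_{N\pi}(\mu)-\frac1\gamma I(\mu\|\mu_0)$ is l.s.c. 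Part c) is then immediate: $\mathcal{P}(\Theta)$ is convex, $\mu\mapsto C_{N\pi}(\mu)=\int C_{N\pi}(\vartheta)\,\mu(d\vartheta)$ is affine, and $\mu\mapsto I(\mu\|\mu_0)=\int(t\ln t)\big(\tfrac{d\mu}{d\mu_0}\big)d\mu_0$ is convex by convexity of $t\mapsto t\ln t$, so $L_N(\cdot,\pi)$ is concave. (Together with Theorem~\ref{theo:minimax} this is exactly what the subsequent interchange of $\inf$ and $\sup$ requires.)

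The substantive inputs are the ws$^\infty$-compactness and convexity of $\Pi^\lambda$, which are precisely the machinery of \cite{schal,schal75}; the genuinely problem-specific check is the lower semicontinuity of $W^\vartheta\ell^\vartheta$ in the actions, which is where (C2)–(C3) and the non-negativity normalization are actually used. I expect the main point requiring care to be stating the compactness result in exactly the form needed here, where the ``dynamics'' entering $\Pop^\lambda_\pi$ are the action-independent reference kernels $\lambda_n$ rather than the $Q_n^\vartheta$.
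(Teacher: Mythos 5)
Your proposal is correct and follows essentially the same route as the paper: the same change of measure rewriting $C_{N\pi}(\mu)$ as an affine functional $\int F_\mu\,d\Pop^\lambda_\pi$ of the strategic measure, compactness of $\Pi$ via Sch\"al, and concavity of $\mu\mapsto L_N(\mu,\pi)$ from convexity of $t\mapsto t\ln t$. The only difference is one of detail: where the paper delegates the lower semicontinuity and the convexlike property to Sch\"al's Corollary~7.3, you supply the underlying arguments (non-negativity normalization, monotone approximation by functions continuous in the actions, and the disintegration of a mixture of strategic measures, which works precisely because the reference kernels $\lambda_n$ are action-independent).
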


\begin{proof}
\begin{itemize}
\item[a)] This is Corollary 7.3 b) in \cite{schal}.
\item[b)] Follows from Corollary 7.3 a) in \cite{schal}. It suffices to show that $C_{N\pi}(\mu)$ is lower semi-continuous on $\Pi$. In order to see how our assumptions are needed we give the following sketch of the proof. First note that
\begin{eqnarray*}
C_{N\pi} (\vartheta) &=& \int \Big( \sum_{n=0}^{N-1} c_n^\vartheta(X_n,A_n) + g_N^\vartheta(X_N)\Big) d\Pop_\pi^\vartheta \\
&=& \int \Big( \sum_{n=0}^{N-1} \tilde{c}_n^\vartheta(X_0,A_0,\ldots,X_n,A_n) + \tilde{g}_N^\vartheta(X_0,A_0,\ldots,X_N) \Big)d\Pop_\pi^\lambda\\
&=:& \tilde{C}_N(\Pop_\pi^\lambda,\vartheta)
\end{eqnarray*}
where 
\begin{eqnarray*}
\tilde{c}_n^\vartheta(h_n,a_n) &:=& c_n^\vartheta(x_n,a_n) q_n^\vartheta(x_{n-1},a_{n-1},x_n) \cdots q_0^\vartheta(x_0)\\
 \tilde{g}_N^\vartheta(h_N) &:=& g_N^\vartheta(x_N) q_{N-1}^\vartheta(x_{N-1},a_{N-1},x_N) \cdots q_0^\vartheta(x_0).
\end{eqnarray*}
We obtain $\tilde{C}_N(\Pop_\pi^\lambda,\mu) := \int \tilde{C}_N(\Pop_\pi^\lambda,\vartheta)\mu(d\vartheta) =C_{N\pi}(\mu)$.
Then one can show (using the assumptions) that $\Pop\mapsto \tilde{C}_N(\Pop,\mu)$ is lower semi-continuous on $\Pi^\lambda$ in the ws$^\infty$-topology. The lower semi-continuity of $C_{N\pi} (\mu) $ on $\Pi$ follows since $\pi \mapsto\Pop_\pi^\lambda$ is continuous.

Note that for the second statement it is important to work with randomized policies.
\item[c)] The convexity of the set is obvious.  Concavity of the mapping can also be shown: For this purpose let $\mu_i\ll \mu_0, i=1,2.$ According to the Radon-Nikodym theorem $\mu_i$ have densities w.r.t.\ $\mu_0$ say $\mu_i = \int g_i d\mu_0$. Hence for $\alpha\in(0,1)$ the measure $\mu := \alpha\mu_1+(1-\alpha)\mu_2$ has density $\alpha g_1+(1-\alpha)g_2$ w.r.t. $\mu_0$ and we consider
$$  I(\mu \| \mu_0) = \int \ln\big(\alpha g_1(\vartheta)+(1-\alpha)g_2(\vartheta)\big)(\alpha g_1(\vartheta)+(1-\alpha)g_2(\vartheta))d\vartheta.$$
Since obviously $x\mapsto x\ln(x)$ is convex for $x>0$,  we obtain that $\mu \mapsto -\frac{1}{\gamma} I(\mu\|\mu_0)$ is concave. Since $\mu\mapsto C_{N\pi}(\mu)$ is linear, the  statement follows. 
\end{itemize}
\end{proof}

All assumptions of Theorem \ref{theo:minimax} are now satisfied and we obtain

\begin{theorem}\label{theo:Minimax1}
Under assumptions (C1)-(C3) it holds:
\begin{itemize}
\item[a)] Min and sup can be interchanged, i.e. \begin{equation}
\min_\pi\sup_{\mu\in \mathcal{P}(\Theta)} L_N(\mu,\pi) = V_N= \sup_{\mu\in \mathcal{P}(\Theta)} \min_\pi L_N(\mu,\pi).
\end{equation}
\item[b)] There exists an optimal policy $\pi^*$  for \eqref{opt:prob}, i.e. $V_N(\pi^*) =V_N.$ 
\end{itemize}
\end{theorem}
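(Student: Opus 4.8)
The plan is to apply the minimax theorem, Theorem \ref{theo:minimax}, to $h=L_N$ on $M\times N$ with $M=\mathcal{P}(\Theta)$ in the role of the ``concave'' variable and $N=\Pi$ in the role of the ``convex'', compact variable. Note that this assignment is forced: Theorem \ref{theo:minimax} requires the space over which the \emph{minimum} is taken (here the policy $\pi$) to be the compact one, and we do not have compactness of $\mathcal{P}(\Theta)$. First I would record that, after restricting $M$ to $\{\mu\in\mathcal{P}(\Theta):\mu\ll\mu_0\}$ (which changes no supremum, since $L_N(\mu,\pi)=-\infty$ whenever $\mu\not\ll\mu_0$, and which is still convex), the function $L_N$ is real-valued on $M\times N$: the one-stage and terminal costs are bounded, hence $C_{N\pi}(\vartheta)$ is bounded above uniformly in $(\vartheta,\pi)$, and since $I(\mu\|\mu_0)\ge 0$ we have $L_N(\mu,\pi)=C_{N\pi}(\mu)-\frac1\gamma I(\mu\|\mu_0)$ bounded above.

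Next I would verify the three hypotheses of Theorem \ref{theo:minimax} using Lemma \ref{lem:CC}. Compactness of $N=\Pi$ is Lemma \ref{lem:CC}~a). Lower semi-continuity of $\pi\mapsto L_N(\mu,\pi)$ for each fixed $\mu$ is Lemma \ref{lem:CC}~b) (and in particular $\min_\pi L_N(\mu,\pi)$ is attained, so ``$\min$'' is legitimate). For the concave-convexlike property: condition (i) holds because $\mathcal{P}(\Theta)$ (and the restricted set) is convex and $\mu\mapsto L_N(\mu,\pi)$ is concave by Lemma \ref{lem:CC}~c), so given $\mu_1,\mu_2$ and $\alpha\in[0,1]$ one simply takes $\mu=\alpha\mu_1+(1-\alpha)\mu_2$ and obtains $\alpha L_N(\mu_1,\pi)+(1-\alpha)L_N(\mu_2,\pi)\le L_N(\mu,\pi)$ for every $\pi$; condition (ii) is exactly Lemma \ref{lem:CC}~b), which for given $\pi_1,\pi_2$ and $\alpha$ produces a policy $\pi$ with $\alpha L_N(\mu,\pi_1)+(1-\alpha)L_N(\mu,\pi_2)=L_N(\mu,\pi)$ for all $\mu$, hence in particular the required inequality.

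With all hypotheses checked, Theorem \ref{theo:minimax} gives $\sup_{\mu}\min_{\pi}L_N(\mu,\pi)=\min_{\pi}\sup_{\mu}L_N(\mu,\pi)$, the outer infimum over $\pi$ on the right being attained. By the dual representation of the entropic risk measure recalled just before \eqref{eq:dual} we have $V_N(\pi)=\sup_{\mu\in\mathcal{P}(\Theta)}L_N(\mu,\pi)$, and $V_N=\inf_\pi V_N(\pi)$; combining these with the displayed identity proves a). For b), pick $\pi^*$ attaining $\min_{\pi}\sup_{\mu}L_N(\mu,\pi)$ (it exists by the theorem, or directly because $\pi\mapsto V_N(\pi)$ is a supremum of lower semi-continuous functions, hence lower semi-continuous on the compact space $\Pi$, so it attains its minimum); then $V_N(\pi^*)=\sup_\mu L_N(\mu,\pi^*)=\min_\pi\sup_\mu L_N(\mu,\pi)=V_N$.

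Once Lemma \ref{lem:CC} is in hand the argument is essentially bookkeeping, so there is no serious analytic obstacle; the one point that deserves care — and the conceptual heart of the matter — is condition (ii), which genuinely needs \emph{randomized} policies: a convex combination of two deterministic decision rules is in general not deterministic, but within the larger class $\Pi$ one can exhibit a policy whose induced measure $\Pop^\lambda_\pi$ is the corresponding convex combination, so that $L_N(\mu,\pi)$ equals the exact convex combination of $L_N(\mu,\pi_1)$ and $L_N(\mu,\pi_2)$ simultaneously for all $\mu$. This is why the existence statement in b) yields, at this stage, only a (possibly randomized) optimal policy; that it may be taken deterministic is exactly what the subsequent Bayesian-MDP analysis is for.
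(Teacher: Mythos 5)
Your proposal is correct and follows essentially the same route as the paper: both apply the Kneser--Fan--Sion minimax theorem (Theorem \ref{theo:minimax}) with $\Pi$ as the compact, convexlike variable and $\mathcal{P}(\Theta)$ as the concave variable, with all hypotheses supplied by Lemma \ref{lem:CC}, and both obtain part b) from the lower semi-continuity of $\pi\mapsto\sup_\mu L_N(\mu,\pi)$ on the compact set $\Pi$. The paper's proof is a two-line citation of these facts; your write-up merely makes the bookkeeping (including the harmless restriction to $\mu\ll\mu_0$ and the role of randomized policies in the convexlike condition) explicit.
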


\begin{proof}
Part a) is a direct consequence of Theorem \ref{theo:minimax} and the fact that $L_N(\mu,\pi)$ is lower semi-continuous in $\pi$ due to Lemma \ref{lem:CC} on the compact set $\Pi$.
Part b) follows from a) since $\pi\mapsto \sup_{\mu\in \mathcal{P}(\Theta)}L_N(\mu,\pi)$ is lower semi-continuous. 
\end{proof}

For the second main theorem we need further conditions for all $n=0,1,\ldots, N-1$:
\begin{description}
\item[(C4)] The parameter space $\Theta$ is a  compact metric space (endowed with the $\sigma$-algebra of Borel subsets of $\Theta$).
\item[(C5)] The function $(\vartheta,a)\mapsto q_n^\vartheta(x,a,x')$ is lower semi-continuous on $\Theta\times D_{n-1}(x)$ for all $x,x'\in E$.
\item[(C6)] The function $(\vartheta,a)\mapsto c_n^\vartheta (x,a)$ is continuous on $\Theta\times D_{n-1}(x)$ for all $x\in E$ and the function $\vartheta \mapsto g_N^\vartheta(x)$ is continuous for all $x\in E$.
\end{description}

 These assumptions imply that we obtain a worst prior measure (initial belief). Here we endow $\mathcal{P}(\Theta)$ with the weak topology.

\begin{theorem}\label{theo:Minimax2}
In addition to (C1),(C2) assume that (C4)-(C6) are satisfied.  Then it holds:
\begin{itemize}
\item[a)] There exists a saddle point  $(\mu^*,\pi^*)$ of the function $(\mu,\pi)\mapsto L_N(\mu,\pi)$ and  \begin{equation}
\min_\pi\max_{\mu\in \mathcal{P}(\Theta)} L_N(\mu,\pi) = L_N(\mu^*,\pi^*)=V_N=\max_{\mu\in \mathcal{P}(\Theta)} \min_\pi L_N(\mu,\pi).
\end{equation}
\item[b)] The policy $\pi^*$ is an optimal policy for \eqref{opt:prob} and $\pi^*$ is an optimal Bayes policy w.r.t.\ $\mu^*$, i.e. $C_{N\pi^*}(\mu^*)= \inf_\pi C_{N\pi}(\mu^*).$
\item[c)]  There exists a deterministic policy $f^*:=(f_0^*,\ldots, f_{N-1}^*)$
 with $C_{Nf^*}(\mu^*)=C_{N\pi^*}(\mu^*)$, i.e. $f^*$ is optimal for \eqref{opt:prob}.
\end{itemize}
\end{theorem}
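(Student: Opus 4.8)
The plan is to re-run the minimax argument, this time with $\mathcal{P}(\Theta)$ turned into a compact set by (C4), and then to read off (b) and (c) from the saddle point so obtained.

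\emph{Part (a).} Since (C5) implies (C2) and (C6) implies (C3), Theorem~\ref{theo:Minimax1} is available and gives $\min_\pi\sup_{\mu}L_N(\mu,\pi)=V_N=\sup_{\mu}\min_\pi L_N(\mu,\pi)$ together with a policy $\pi^*$ realizing the outer minimum. It remains to turn the outer supremum over $\mu$ into a maximum. By (C4) and Prokhorov's theorem $\mathcal{P}(\Theta)$ is compact in the weak topology; the relative entropy $\mu\mapsto I(\mu\|\mu_0)$ is weakly lower semicontinuous, and under (C5)--(C6) the linear map $\mu\mapsto C_{N\pi}(\mu)$ is weakly upper semicontinuous (this is where one checks, as in the proof of Lemma~\ref{lem:CC}, that $\vartheta\mapsto C_{N\pi}(\vartheta)$ is bounded and sufficiently regular), so $\mu\mapsto L_N(\mu,\pi)$ and hence $\mu\mapsto\min_\pi L_N(\mu,\pi)$ are weakly upper semicontinuous; the latter attains its maximum on the compact set $\mathcal{P}(\Theta)$ at some $\mu^*$. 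I would then invoke the elementary fact that when $\min_\pi\sup_\mu L_N=\sup_\mu\min_\pi L_N=V_N$ and the outer extrema are attained at $\pi^*$ and $\mu^*$, the pair $(\mu^*,\pi^*)$ is automatically a saddle point: the chain $V_N=\min_\pi L_N(\mu^*,\pi)\le L_N(\mu^*,\pi^*)\le\sup_\mu L_N(\mu,\pi^*)=V_N$ forces $L_N(\mu^*,\pi^*)=V_N$, and then $L_N(\mu,\pi^*)\le V_N=L_N(\mu^*,\pi^*)\le L_N(\mu^*,\pi)$ for all $\mu$ and all $\pi$.

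\emph{Part (b).} The two halves of the saddle inequality are exactly the two assertions. From $L_N(\mu,\pi^*)\le L_N(\mu^*,\pi^*)=V_N$ for all $\mu$, together with the dual representation $V_N(\pi^*)=\sup_\mu L_N(\mu,\pi^*)$, one gets $V_N(\pi^*)\le V_N$; since $V_N\le V_N(\pi^*)$ trivially, $\pi^*$ is optimal for \eqref{opt:prob}. From $L_N(\mu^*,\pi)\ge L_N(\mu^*,\pi^*)$ for all $\pi$ -- and noting $I(\mu^*\|\mu_0)<\infty$, for otherwise $L_N(\mu^*,\pi^*)=-\infty$ would contradict the finiteness of $V_N$ (the costs being bounded) -- the common finite entropy term cancels, leaving $C_{N\pi}(\mu^*)\ge C_{N\pi^*}(\mu^*)$ for all $\pi$, i.e.\ $C_{N\pi^*}(\mu^*)=\inf_\pi C_{N\pi}(\mu^*)$.

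\emph{Part (c).} By (b) it suffices to exhibit a deterministic Bayes-optimal policy for the classical Bayesian MDP with prior $\mu^*$. I would reformulate that Bayesian MDP in the standard way as an MDP with information state $(x_n,\mu_n)\in E\times\mathcal{P}(\Theta)$, where $\mu_n$ is the Bayesian posterior given the observable history $h_n$ (a measurable function of $h_n$), with one-stage costs $a\mapsto\int c_n^\vartheta(x_n,a)\,\mu_n(d\vartheta)$, terminal cost $\int g_N^\vartheta(x_N)\,\mu_N(d\vartheta)$ and the induced transition law of $(X_n,\mu_n)$; policies of the original model correspond to policies of this MDP. Under (C1), (C2) and (C6) the reformulated MDP satisfies the continuity--compactness hypotheses of \cite{schal}, so finite-horizon dynamic programming yields a deterministic optimal Markov policy $(\phi_0^*,\dots,\phi_{N-1}^*)$ with $\phi_n^*\colon E\times\mathcal{P}(\Theta)\to A$; composing it with the posterior map gives a deterministic $f^*=(f_0^*,\dots,f_{N-1}^*)\in\Pi$ with $C_{Nf^*}(\mu^*)=\inf_\pi C_{N\pi}(\mu^*)=C_{N\pi^*}(\mu^*)$. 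Hence $L_N(\mu^*,f^*)=L_N(\mu^*,\pi^*)=V_N$; since $f^*$ also minimizes $\pi\mapsto C_{N\pi}(\mu^*)$, it then remains only to see that $\mu^*$ still maximizes $\mu\mapsto L_N(\mu,f^*)$, for then $(\mu^*,f^*)$ is a saddle point and $V_N(f^*)=\sup_\mu L_N(\mu,f^*)=L_N(\mu^*,f^*)=V_N$, i.e.\ $f^*$ is optimal for \eqref{opt:prob}.

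\emph{The main obstacle.} The measure-theoretic heart of (c) is making the belief-space reformulation rigorous: one must verify that the posterior-update kernel $\mu\mapsto\Phi_n(x,a,x',\mu)$ is measurable and carries enough lower semicontinuity/compactness for the reformulated MDP to fall under the cited existence theorems and to admit a measurable minimizing selector. The point that most deserves care, however, is the final implication of (c): the identity $C_{Nf^*}(\mu^*)=C_{N\pi^*}(\mu^*)$ controls only the $\mu^*$-average of the costs, whereas $V_N(f^*)$ depends on $\vartheta\mapsto C_{Nf^*}(\vartheta)$ through all of $\mathcal{P}(\Theta)$; so before one may conclude ``$f^*$ is optimal for \eqref{opt:prob}'' one has to confirm that passing from the (possibly randomized) saddle policy $\pi^*$ to the deterministic Bayes-optimal $f^*$ preserves the saddle-point property, i.e.\ that $\mu^*$ remains a maximizer of $\mu\mapsto L_N(\mu,f^*)$.
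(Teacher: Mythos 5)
Your parts a) and b) are correct and follow the paper's own route: the paper likewise gets weak compactness of $\mathcal{P}(\Theta)$ from (C4), upper semicontinuity of $\mu\mapsto L_N(\mu,\pi)$ from continuity of $\mu\mapsto C_{N\pi}(\mu)$ (Corollary 8.3 in \cite{schal}) plus weak lower semicontinuity of the relative entropy (\cite{posner}), and then appeals to ``the classical saddle point theorem''; your explicit chain of inequalities is exactly what that appeal amounts to, and your derivation of b) from the two saddle inequalities (including the remark that $I(\mu^*\|\mu_0)<\infty$ so the entropy term cancels) is the content of the paper's one-line justification. For c), your belief-state reformulation is the standard construction the paper delegates to \cite{hin} (Theorem 15.2 with Lemma 15.1), and it does deliver a deterministic $f^*$ with $C_{Nf^*}(\mu^*)=\inf_\pi C_{N\pi}(\mu^*)=C_{N\pi^*}(\mu^*)$.

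The obstacle you flag at the end of c) is, however, a genuine gap --- and it is a gap in the paper's proof as well, which offers no argument for the final ``i.e.'' beyond the citation to \cite{hin}. The implication from $C_{Nf^*}(\mu^*)=C_{N\pi^*}(\mu^*)$ to $V_N(f^*)=V_N$ fails in general, because replacing the saddle policy $\pi^*$ by a deterministic Bayes policy at $\mu^*$ need not preserve the saddle property: $\mu^*$ need not maximize $\mu\mapsto L_N(\mu,f^*)$. A concrete instance satisfying (C1),(C2),(C4)--(C6): one state, $N=1$, $A=\{a_1,a_2\}$, $\Theta=\{1,2\}$, $\mu_0=(\tfrac12,\tfrac12)$, transitions independent of $\vartheta$, costs $c^1(a_1)=c^2(a_2)=0$ and $c^1(a_2)=c^2(a_1)=1$. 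Then $\mu^*=(\tfrac12,\tfrac12)$ and every policy has $C_{N\pi}(\mu^*)=\tfrac12$; the randomized policy choosing each action with probability $\tfrac12$ achieves $V_N(\pi^*)=\tfrac12=V_N$, while each deterministic policy gives $V_N(f)=\tfrac1\gamma\ln\bigl(\tfrac12(1+e^{\gamma})\bigr)>\tfrac12$ for $\gamma>0$. So the conclusion ``$f^*$ is optimal for \eqref{opt:prob}'' needs an extra hypothesis (e.g.\ uniqueness of the Bayes-optimal policy at $\mu^*$, which is what makes the paper's Section 6 example work) or must be weakened to the identity $C_{Nf^*}(\mu^*)=C_{N\pi^*}(\mu^*)$ alone. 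In short: your proposal matches the paper on a) and b), and on c) you have correctly isolated the one step that neither you nor the paper can justify --- because, as stated, it is false.
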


\begin{proof}
a) $\Theta$ compact implies that $\mathcal{P}(\Theta)$ is weakly compact.  Moreover, the mapping $\mu \mapsto L_N(\mu,\pi)$ is  upper semi-continuous in the weak topology, since $\mu \mapsto \int \Eop^\vartheta_\pi[C_{N\pi}(\vartheta)] \mu(d\vartheta)$ is continuous by our assumptions  (see Corollary 8.3 in \cite{schal})  and the entropy function $\mu \mapsto I(\mu \| \mu_0)$ is lower semi-continuous w.r.t.\  the weak topology (see Theorem 1 in \cite{posner}).  Hence also $\mu\mapsto \inf_\pi L_N(\mu,\pi)$ is upper semi-continuous and attains its supremum on $\mathcal{P}(\Theta)$. The existence of a saddle point $(\mu^*,\pi^*)$ follows from the classical saddle point theorem of game theory. 
%Moreover, we obtain immediately the other statements.

Part b) follows directly from a) since $V_{N}(\pi^*)=V_N$ is equivalent to $C_{N\pi^*}(\mu^*)=\inf_\pi C_{N\pi}(\mu^*)$. Part c) is well-known in Bayesian MDPs and follows with \cite{hin}, Theorem 15.2 together with Lemma 15.1.
\end{proof}

Theorem \ref{theo:Minimax2} has the advantage that it is possible to solve the inner optimization problem $\min_\pi L_N(\mu,\pi)$ explicitly. Since the entropy part does not depend on the policy $\pi$, only the part $C_{N\pi}(\mu)$ is interesting and it can be solved with the established theory of Bayesian MDP (see Section \ref{sec:BayesMDP}). Of course, the resulting optimal policy depends on $\mu^*$ which has to be computed in a second step. 

\begin{remark}
It is possible to consider MDPs with infinite time horizon in the same way. I.e.\ instead of $C_{N\pi}(\vartheta)$ we take 
$$ C_{\infty\pi}(\vartheta) := \Eop^\vartheta_\pi\Big[\sum_{n=0}^{\infty}  c^\vartheta_n(X_n,A_n)\Big]$$
and assume $\sum_{n=0}^\infty \sup_{\vartheta,x,a}| c_n^\vartheta(x,a)|<\infty$ or a weaker convergence assumption. Then we obtain the same results as for finite-stage MDPs with agents who are ambiguity averse.

\end{remark}

\begin{remark}
The entropic risk measure motivates to penalize the robust MDP formulation by the deviation of the prior from the 'statistically correct' prior $\mu_0$. Instead of taking the relative entropy one could of course take any other distance which is convex. For example one could take the {\em Bhattacharyya distance} which for probability measures $\mu$ and $\mu_0$ with densities $\varphi$ and $\varphi_0$ is defined by
$$ D_B(\mu,\mu_0) := -\log\Big(\int \sqrt{\varphi(x)\varphi_0(x)}dx\Big)$$  and is a convex mapping in $\mu$ for fixed $\mu_0$. For details see \cite{bat}.
\end{remark}

\section{MDP with Average Value at Risk for Model Ambiguity}\label{sec:avar}
Instead of the entropic risk measure one may apply any other convex risk measure to penalize model ambiguity. Convex risk measures have a representation in dual form (see \cite{fs}, Theorem 4.33) which can be used to apply the minimax Theorem.  In what follows we restrict the discussion to the Average Value at Risk. 
We consider the same Bayesian MDP framework as in Section 2 with a fixed initial belief $\mu_0$ and define the Value at Risk at level $\gamma\in(0,1)$ for the random variable $\vartheta\mapsto C_{N\pi}(\vartheta)$  on $\Theta$  as $$VaR_\gamma(C_{N\pi}) := \inf\{x\in\R : \mu_0(C_{N\pi}\le x) \ge \gamma\}.$$
Note that we consider the actuarial point of view here where large positive outcomes are bad and $\gamma$ is usually close to 1. Moreover, note that $VaR_\gamma(C_{N\pi}) $ depends on $\mu_0$.

 When model ambiguity is measured by the Average Value at Risk, we obtain as optimization criterion
\begin{eqnarray}
 V_N(\pi) &:=&  \frac{1}{1-\gamma} \int_\gamma^1 VaR_\alpha(C_{N\pi})d\alpha.\\
  V_N &:=& \inf_\pi V_N(\pi). \label{eq:avarprob}
\end{eqnarray}
If $\gamma \downarrow 0$ we get in the limit just the expectation and thus the classical risk neutral setting. For $\gamma\uparrow 1$  we obtain in the limit the worst-case risk measure. 
Using the dual representation of Average Value at Risk (see e.g. \cite{fs}, Theorem 4.52) this amounts to
$$ V_N = \inf_\pi \sup_{\mu\in\mathcal{Q}_\gamma} C_{N\pi}(\mu)$$
with $C_{N\pi}(\mu) := \int C_{N\pi}(\vartheta)\mu(d\vartheta)$ and $$\mathcal{Q}_\gamma := \Big\{ \mu\in \mathcal{P}(\Theta) : \mu \ll \mu_0, \frac{d\mu}{d\mu_0}\le \frac{1}{1-\gamma}\Big\}.$$ The idea here is to proceed in the same way as in Section 3 and use the previously established results. We obtain with some slight changes to the previous section:
\newpage 

\begin{theorem}\label{theo:CC2}
Under  (C1),(C2) and (C4)-(C6) it holds:
\begin{itemize}
\item[a)] There exists a saddle point  $(\mu^*,\pi^*)$ of the function $(\mu,\pi)\mapsto C_{N\pi}(\mu)$ and  \begin{equation}
\min_\pi\max_{\mu\in \mathcal{Q}_\gamma}C_{N\pi}(\mu) = C_{N\pi^*}(\mu^*)=V_N=\max_{\mu\in \mathcal{Q}_\gamma} \min_\pi C_{N\pi}(\mu).
\end{equation}
\item[b)] The policy $\pi^*$ is an optimal policy for \eqref{eq:avarprob}, i.e. $V_N(\pi^*)=V_N$, and $\pi^*$ is an optimal Bayes policy w.r.t.\ $\mu^*$, i.e. $C_{N\pi^*}(\mu^*)= \inf_\pi C_{N\pi}(\mu^*).$
\item[c)]  There exists a deterministic policy $f^*:=(f_0^*,\ldots,f_{N-1}^*)$  with $C_{Nf^*}(\mu^*)=C_{N\pi^*}(\mu^*)$ and $f^*$ is optimal for \eqref{eq:avarprob}, i.e. $V_N(f^*)=V_N$.
% with $V_n(f^*)=V_n(\pi^*)$.
\end{itemize}
\end{theorem}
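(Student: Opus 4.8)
The plan is to mimic almost verbatim the argument developed in Section~\ref{sec:policies} for the entropic case, with the function $L_N(\mu,\pi)$ replaced by $C_{N\pi}(\mu)$ and the measure space $\mathcal P(\Theta)$ replaced by the restricted set $\mathcal Q_\gamma$. The essential observation is that $C_{N\pi}(\mu)$ is linear (hence both concave and convex) in each variable separately, so the concave-convexlike hypothesis of Theorem~\ref{theo:minimax} is automatic, and the only structural facts we still need are compactness of $\Pi$, lower semi-continuity of $\pi\mapsto C_{N\pi}(\mu)$, compactness of $\mathcal Q_\gamma$, and upper semi-continuity of $\mu\mapsto C_{N\pi}(\mu)$.

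First I would verify the properties of $\mathcal Q_\gamma$. Under (C4) the set $\mathcal P(\Theta)$ is weakly compact, and $\mathcal Q_\gamma=\{\mu\in\mathcal P(\Theta): \mu\ll\mu_0,\ d\mu/d\mu_0\le \tfrac1{1-\gamma}\}$ is a weakly closed, convex subset of it (closedness follows because the density bound $\int h\,d\mu\le \tfrac1{1-\gamma}\int h\,d\mu_0$ for all nonnegative bounded measurable $h$ passes to weak limits, since the prior $\mu_0$ is fixed), hence $\mathcal Q_\gamma$ is weakly compact and convex. Next, exactly as in Lemma~\ref{lem:CC}a) and b), assumptions (C1),(C2) together with the continuity of costs in (C6) give that $\Pi$ is compact and that $\pi\mapsto C_{N\pi}(\vartheta)=\tilde C_N(\Pop^\lambda_\pi,\vartheta)$ is lower semi-continuous on $\Pi$ in the ws$^\infty$-topology; integrating against $\mu$ preserves this, so $\pi\mapsto C_{N\pi}(\mu)$ is lower semi-continuous for every $\mu\in\mathcal Q_\gamma$. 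The convexity-in-$\pi$ needed for the minimax theorem is again supplied by randomization: for $\pi_1,\pi_2\in\Pi$ and $\alpha\in(0,1)$ there is $\pi\in\Pi$ with $C_{N\pi}(\mu)=\alpha C_{N\pi_1}(\mu)+(1-\alpha)C_{N\pi_2}(\mu)$ for all $\mu$, just as in Lemma~\ref{lem:CC}b).

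Given these ingredients, part a) follows in two moves. Applying Theorem~\ref{theo:minimax} with $M=\Pi$ (playing the role of the maximizing space after noting $\min_\pi=-\max_\pi(-\cdot)$ and that $-C_{N\pi}(\mu)$ is concave in $\pi$, convex in $\mu$) and $N=\mathcal Q_\gamma$ compact, using lower semi-continuity in $\pi$, yields $\min_\pi\sup_{\mu\in\mathcal Q_\gamma}C_{N\pi}(\mu)=\sup_{\mu\in\mathcal Q_\gamma}\min_\pi C_{N\pi}(\mu)=V_N$; the inner minimum over $\pi$ is attained because $\Pi$ is compact and $\pi\mapsto C_{N\pi}(\mu)$ is lower semi-continuous, and the outer supremum is attained because, by Corollary~8.3 in \cite{schal}, $\mu\mapsto C_{N\pi}(\mu)$ is weakly continuous (no entropy term is present here, so no lower-semicontinuous penalty needs subtracting), hence $\mu\mapsto\min_\pi C_{N\pi}(\mu)$ is upper semi-continuous on the weakly compact $\mathcal Q_\gamma$. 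With both sides attained and equal, the classical saddle-point theorem of game theory gives a pair $(\mu^*,\pi^*)$ with $C_{N\pi^*}(\mu^*)=V_N=\min_\pi\max_\mu C_{N\pi}(\mu)=\max_\mu\min_\pi C_{N\pi}(\mu)$. Part b) is then immediate: since $\pi^*$ attains $\min_\pi\max_{\mu\in\mathcal Q_\gamma}C_{N\pi}(\mu)$, which by the dual representation of Average Value at Risk equals $V_N(\pi^*)$, we get $V_N(\pi^*)=V_N$; and the saddle property gives $C_{N\pi^*}(\mu^*)=\min_\pi C_{N\pi}(\mu^*)$, i.e. $\pi^*$ is Bayes-optimal for $\mu^*$. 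Finally part c) is the standard reduction of a Bayesian MDP to an ordinary MDP on an enlarged state space (state plus posterior belief): by \cite{hin}, Theorem~15.2 together with Lemma~15.1, among all (randomized) policies there is a deterministic one $f^*=(f_0^*,\dots,f_{N-1}^*)$ achieving $\inf_\pi C_{N\pi}(\mu^*)$, hence $C_{Nf^*}(\mu^*)=C_{N\pi^*}(\mu^*)=V_N$.

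The main obstacle I anticipate is not any of the MDP-side arguments, which are literally the proofs of Theorems~\ref{theo:Minimax1} and \ref{theo:Minimax2} reused, but rather checking cleanly that $\mathcal Q_\gamma$ is weakly closed and that one really may take $\max$ rather than $\sup$ over it; the density constraint $d\mu/d\mu_0\le \tfrac1{1-\gamma}$ is not weakly continuous as a functional, so one must argue via the reformulation $\int h\,d\mu\le\tfrac1{1-\gamma}\int h\,d\mu_0$ for all bounded measurable $h\ge0$ (or, on a Polish $\Theta$, for all bounded continuous $h\ge0$, which then extends by a monotone-class argument) and observe this is preserved under weak convergence. Everything else is a routine transcription, and one should also remark — as the paper does — that since the penalty term is absent the situation is in fact slightly simpler than in Theorem~\ref{theo:Minimax2}, so no analogue of the delicate lower-semicontinuity of relative entropy (\cite{posner}) is required.
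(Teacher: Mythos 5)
Your proposal is correct and follows essentially the same route as the paper: the paper's proof likewise says that Lemma~\ref{lem:CC} carries over since $\mathcal{Q}_\gamma$ is convex and that the statements then follow as in Theorem~\ref{theo:Minimax2} using weak compactness of $\mathcal{Q}_\gamma$. The only difference is that you verify the weak closedness/compactness of $\mathcal{Q}_\gamma$ by hand via the reformulated density bound, whereas the paper simply cites Corollary~4.38 in \cite{fs}; your direct argument is sound.
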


\begin{proof}
First note that Lemma \ref{lem:CC} holds in the same way since $\mathcal{Q}_\gamma$ is convex. The statements follow as in the proof of Theorem \ref{theo:Minimax2} since $\mathcal{Q}_\gamma$ is weakly compact  (see Corollary 4.38 in \cite{fs}).
\end{proof}

\section{Solving the Bayesian Dynamic Decision Problem}\label{sec:BayesMDP}
Let $\mu_0\in\mathcal{P}(\Theta)$ be fixed. The problem $\inf_\pi C_{N\pi}(\mu_0)=C_N(\mu_0)$ is a standard Bayesian MDP. For a detailed explanation how these problems can be solved, see \cite{br11}, Section 5.   We give here only a rough idea. Note that in the context of Section 3 and 4 we have to replace $\mu_0$ by $\mu^*$ in the solution procedure.  
%First note that one has to fix an initial state first. Thus we consider for a fixed policy $\pi$
%$$ {\Pop}^\pi_x := \mu_0 \otimes \delta_x\otimes \pi_0 \otimes q^\vartheta \otimes \pi_1 \otimes q^\vartheta\ldots$$
%and the value function
%$$ V_n(x) = \inf_\pi {\Eop}^\pi_x\Big[\sum_{t=0}^n \beta^t c^\vartheta(X_t,A_t)\Big].$$ 
%We have the relation $V_n = \int V_n(x) q_0(dx)$.
The problem can be solved by a state space augmentation. The state which has to be considered is $(x,\mu)$ where $x\in E$ and $\mu\in \mathcal{P}(\Theta)$ is the current belief (conditional distribution) about $\vartheta$. This belief has to be updated as follows:
\begin{eqnarray*}
\mu_0(x)(C) &:=&  \frac{\int_C q^\vartheta_0(x)\mu_0(d\vartheta)}{\int_\Theta q^\vartheta_0(x)\mu_0(d\vartheta)}, \quad C\in \mathcal{B}(\Theta),\\
 \Phi_n(x,\mu,a,x')(C) &:=& \frac{\int_C q_n^\vartheta(x,a,x')\mu(d\vartheta)}{\int_\Theta q_n^\vartheta(x,a,x')\mu(d\vartheta)}, \quad C\in \mathcal{B}(\Theta).
\end{eqnarray*}

$\Phi_n(x,\mu,a,x') $ gives the new belief, if the previous belief was $\mu$, the previous state was $x$, the new state is $x'$ and action $a$ is chosen. $\mu_0(x)$ is the new belief directly after the observation of the first state.  Thus, starting with the prior $\mu_0$ we obtain a sequence of beliefs $\mu_n(h_{n-1},a_{n-1},x_n):=  \Phi_n(x_{n-1},\mu_{n-1}(h_{n-1}),a_{n-1},x_n)$ depending on the observations and the history of the process: $\mu_0(x_0),\mu_1(x_0,a_0,x_1), \mu_2(h_2),\ldots$. The state transition kernel  is given by
$$Q^X_n(B|x,\mu,a) = \int Q_n^\vartheta(B|x,a) \mu(d\vartheta), \quad B \in \mathcal{B}(E). $$
Under well-known continuity and compactness assumptions it is then possible to show that the value $$C_N(\mu_0)=\int  \int J_0(x,\mu_0(x)) Q_0^\vartheta(dx) \mu_0(d \vartheta)$$ 
can be computed recursively by
\begin{eqnarray*}
J_N(x,\mu) &:=& \int g^\vartheta_N(x)\mu(d\vartheta),\\
J_n(x,\mu) &:=& \inf_{a\in D_n(x)} \Big\{ \int_\Theta c_n^\vartheta(x,a) \mu(d\vartheta) +  \int J_{n+1} (x',\Phi_{n+1}(x,\mu,a,x')) Q_{n+1}^X(dx'|x,\mu,a)\Big\}.
\end{eqnarray*}
If we denote by $g_n^*$ the (deterministic) minimizer of $J_{n+1}$ on the right-hand side of the equation for $n=0,1,\ldots, N-1$, then the deterministic policy $\pi^*:=(f^*_0,\ldots,f^*_{N-1})$ is optimal for the given problem with
$$ f^*_n(h_n):= g_{n}^*(x_n, \mu_n(h_n)),\quad h_n\in H_n, \quad n=0,1,\ldots,N-1$$ i.e. $C_{N\pi^*}(\mu_0)= C_N(\mu_0)$.

\section{An Example}
We consider the following example which is taken from \cite{dG}, Example 2, Section 12.6. A statistician  observes a sequence of Bernoulli random variables with unknown success probability $\vartheta$. Suppose that $\vartheta$ is either $\frac13$ or $\frac23$. She has an initial belief $\mu_0$ about the two probabilities. Two actions are available: Either stop the observation process and choose a terminal decision or make a further observation of the Bernoulli trial. The cost of one observation is $1$ and if the decision is correct, there is no cost. For a wrong terminal decision one has to pay the amount of $10$. What is the optimal Bayesian strategy? We assume that the statistician is risk averse and uses the criteria presented in this paper.  
%has to decide whether the success probability for a Bernoulli trial is either $\frac13$ or $\frac23$. She has a prior belief about the two probabilities. Two actions are available: Either make a decision in which case one would of course decide for the hypothesis with the higher probability or make another observation of the Bernoulli trial. The cost of one observation is $1$ and if the decision is correct, there is no cost. For a wrong decision one has to pay the amount of $10$. What is the optimal decision strategy? We assume that the statistician is risk averse and uses the criteria presented in this paper.  

We use Theorem \ref{theo:Minimax2} resp.\ Theorem \ref{theo:CC2} to solve these problems. 
%Thus we first have to look at the optimization problem $\min_\pi C_{n\pi}(\mu)$ for all prior distributions $\mu$. But this is a standard Bayesian MDP, see Section \ref{sec:BayesMDP}. 
We have to take as the state space the current belief about the two hypothesis. Since the parameter set is $\Theta=\{ \frac13, \frac23 \}$ these beliefs are only two-point distributions. In what follows we assume that the interval $[0,1]$ is the state space where $\mu\in[0,1]$ is the current belief that $\vartheta=\frac13$ is the true parameter. The action space is $A=\{1,2\}$ where $a=1$ means to take another observation and $a=2$ means to stop the observation process and choose a terminal  decision (which is then the hypothesis with higher belief). 
%Thus the cost function (which does not depend on $\vartheta$ here) is
In this case the cost is given by
$$ c(\mu) := \min\{10\mu,10(1-\mu)\}.$$
%\begin{eqnarray*}
%c(\mu,0) &=& 1,\\
%c(\mu,1) &=& 10 \mu 1_{\{\mu\le \frac12\}}+10 (1-\mu) 1_{\{\mu> \frac12\}}.
%\end{eqnarray*}  
%We have $\beta=1$ and $g^\vartheta =0$.
In case we decide to take another observation and the observation is a 'success' (indicated by '1') we obtain the following new belief:
$$\Phi(\mu,1)= \frac{\frac13\mu}{\frac13\mu+\frac23(1-\mu)}=\frac{\mu}{2-\mu}.$$
In case we observe a 'failure' (indicated by '0') we obtain for the new belief
$$\Phi(\mu,0)= \frac{\frac23\mu}{\frac23\mu+\frac13(1-\mu)}=\frac{2\mu}{1+\mu}.$$
%As before let $C_n(\mu) := \inf_\pi C_{n\pi}(\mu),  \mu\in \mathcal{P}(\Theta)$. 
Then we obtain from the  Bayesian MDP theory the following recursion:% (see Section 5, note that $C_n(\mu)=J_n(\mu)$.)
\begin{eqnarray*}
C_0(\mu) &=& c(\mu),\\
C_n(\mu) &=& \min\Big\{ c(\mu) ; 1+ \Big(\frac13 \mu +\frac23 (1-\mu)\Big) C_{n-1}\Big(\frac{\mu}{2-\mu}\Big)+\Big(1-\frac13 \mu -\frac23 (1-\mu)\Big) C_{n-1}\Big(\frac{2\mu}{1-\mu}\Big)\Big\}.
\end{eqnarray*}  

Working through this recursion finally yields:
$$C_0(\mu) = \left\{ \begin{array}{cl}
10 \mu, & 0\le \mu\le \frac12,\\
10(1-\mu), & \frac12 < \mu\le 1.
\end{array}\right.$$
and $C_n=C_1$ for all $n\in\N$, where
$$C_1(\mu) = \left\{ \begin{array}{cl}
10 \mu, & 0\le \mu\le \frac{13}{30},\\[0.1cm]
\frac{13}{3}, & \frac{13}{30}< \mu\le \frac{17}{30},\\[0.1cm]
10(1-\mu), & \frac{17}{30} < \mu\le 1.
\end{array}\right.$$
The optimal decision at the beginning is to take another observation $(a=1)$ if $\mu\in (\frac{13}{30}, \frac{17}{30}),$ otherwise take a  terminal decision $(a=2)$ immediately. After one observation the statistician will always take a terminal decision.

\subsection{Problem with Entropic Risk Measure}
When we want to solve the  problem with risk aversion against ambiguity measured by the entropic risk measure, we now have to consider the following problem where $\mu_0\in (0,1)$ is the initial belief
$$ V_N= \sup_{0<\mu<1} \Big\{C_1(\mu)-\frac1\gamma I(\mu\| \mu_0)\Big\}.$$ 
Using the fact that the function $C_1$ is symmetric i.e. $C_1(\mu)=C_1(1-\mu)$ this boils down to 
\begin{eqnarray}
V_N&=& \max\Big\{   \sup_{0<\mu<\frac{13}{30}} \Big\{10 \mu -\frac1\gamma \Big(\mu \ln( \frac{\mu}{\mu_0})+(1-\mu) \ln(\frac{1-\mu}{1-\mu_0}) \Big);\\
&&  \hspace*{1.2cm}\sup_{\frac{13}{30}<\mu\le \frac12} \Big\{\frac{13}{3} -\frac1\gamma \Big(\mu \ln( \frac{\mu}{\mu_0})+(1-\mu) \ln(\frac{1-\mu}{1-\mu_0}) \Big) \Big\}.
\end{eqnarray}
When the statistician is risk averse with parameter $\gamma=0.1$ and has initial belief $\mu_0=0.1$ about the hypothesis $\vartheta=\frac13$ she will rather solve the Bayesian MDP with $\mu^*=0.232$. Observe that $\mu=\frac12$ is most risk averse choice of the prior. In Figure \ref{fig:erm} the optimal $\mu^*$ is plotted as a function of the risk aversion $\gamma$ for different $\mu_0$. Note that
\begin{eqnarray*}
V_N &=& C_1(\mu^*) -\frac1\gamma I(\mu^*\| \mu_0) \mbox{ if } \gamma >0\\
V_N &=&  C_1(\mu_0) \mbox{ if } \gamma =0.
\end{eqnarray*}

\begin{figure}[htb]
\includegraphics[scale=0.6]{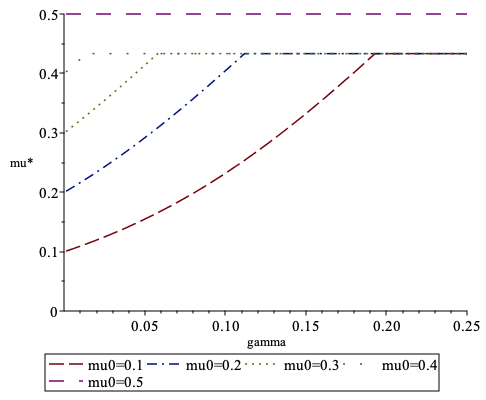}
\caption{$\mu^*$ as a function of $\gamma$ for different $\mu_0$.}\label{fig:erm}
\end{figure}

What we observe in the example is that
\begin{itemize}
\item[(i)] For $\mu_0\in [0,\frac12]$ we have $\mu_0\le \mu^*\le \frac12$.
\item[(ii)] $\lim_{\gamma\downarrow 0} \mu^*(\gamma) = \mu_0.$
\item[(iii)] $\lim_{\gamma\uparrow \infty} \mu^*(\gamma)  \in [\frac{13}{30},\frac12].$
\end{itemize}

The interpretation of (i) is that a risk averse statistician will always shift the statistically correct prior in direction of the uniform distribution. The case   $\lim_{\gamma\downarrow 0} $ is in the limit the classical Bayesian MDP with original prior $\mu_0$. The case $\lim_{\gamma\uparrow \infty}$ corresponds to the robust optimization where the most unfavourable prior is chosen. In this example the most unfavourable prior is any prior in the interval $[\frac{13}{30},\frac12]$ since this requires another observation.

\subsection{Problem with Average Value at Risk}
We can also consider this example with the ambiguity measured by the Average Value at Risk.  Here  we have to solve
$$ \max_{\mu\in \mathcal{M}_\gamma} C_1(\mu), \quad \mathcal{M}_\gamma := \Big\{\mu \in (0,1) : \frac{\mu}{\mu_0}\le \frac{1}{1-{\gamma}}, \frac{1-\mu}{1-\mu_0}\le \frac{1}{1-{\gamma}}\Big\}.$$
The set $\mathcal{M}_\gamma$ corresponds to $\mathcal{Q}_\gamma$.
 The maximum point $\mu^* $ as a function of $\gamma$ and $\mu_0\le \frac12$ is given by (in case on non-uniqueness we give the whole rang of optimal values)
$$ \mu^* = \left\{ \begin{array}{cl}
\frac{\mu_0}{1-\gamma}, & \gamma\le 1-\frac{30}{13}\mu_0\\[0.1cm]
(\frac{13}{30},\frac{\mu_0}{1-\gamma} ), & \gamma \in (1-\frac{30}{13}\mu_0,1-\frac{30}{17}\mu_0)\\[0.1cm]
(\frac{13}{30},\frac{17}{30} ), & \gamma > 1-\frac{30}{17}\mu_0.
\end{array}\right.$$
Due to symmetry reasons we restrict again to the case $\mu_0\le \frac12$. In Figure \ref{fig:avar} the optimal $\mu^*$ is plotted as a function of $\gamma$ for different $\mu_0$. Note that 
\begin{eqnarray*}
V_N &=& C_1(\mu^*) \mbox{ if } \gamma >0\\
V_N &=&  C_1(\mu_0)\mbox{ if } \gamma =0.
\end{eqnarray*}

\begin{figure}[htb]
\includegraphics[scale=0.7]{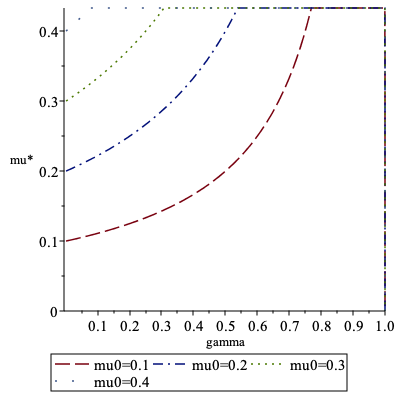}
\caption{$\mu^*$ as a function of $\gamma$ for different $\mu_0$.}\label{fig:avar}
\end{figure}

We again observe in this case that
\begin{itemize}
\item[(i)] For $\mu_0\in [0,\frac12]$ we have $\mu_0\le \mu^*\le \frac12$.
\item[(ii)] $\lim_{\gamma\downarrow 0} \mu^*(\gamma) = \mu_0.$
\item[(iii)] $\lim_{\gamma\uparrow 1} \mu^*(\gamma)  \in [\frac{13}{30},\frac12].$
\end{itemize}

Though in this case the interpretation of $\gamma$ is different, the general behaviour of the optimal $\mu^*$ is the same.

\section{Conclusion}
In this paper we present a proposal to deal with  model ambiguity for MDPs. Using a dual representation and a general minimax theorem we are able to solve the ambiguity problem. The solution procedure is illustrated by an example taken from statistical decision theory. The approach is closely related to robust MDPs.

\bibliographystyle{plainnat}

\end{document}